\renewcommand{\theequation}{\mbox{\arabic{section}.\arabic{equation}}}
\newtheorem{theorem}{Theorem}[section]
\newtheorem{corollary}[theorem]{Corollary} 
\newtheorem{lemma}[theorem]{Lemma}
\newtheorem{remark}[theorem]{Remark}
\newcommand{\Rot}{\text{\upshape Rot}}
\newcommand{\Jac}{\text{\upshape Jac}}
\DeclareMathOperator{\diver}{div}
\title[Amari-Chentsov connections and their geodesics] 
{Amari-Chentsov connections and their geodesics on homogeneous 
 spaces of diffeomorphism groups}
\author{Jonatan Lenells} 
\address{J.L.: Department of Mathematics, Baylor University, Waco, TX 76798, USA}
\email{Jonatan\_Lenells@baylor.edu}
\author{Gerard Misio\l ek}
\address{G.M.: Department of Mathematics, University of Notre Dame, IN 46556, USA} 
\email{gmisiole@nd.edu} 
\thanks{G.M. was partially supported by the James D. Wolfensohn Fund and 
the Friends of the Institute for Advanced Study Fund. J.L. acknowledges support from the EPSRC, UK}
\begin{document}

\begin{abstract} 
\noindent 
We study the family of $\alpha$-connections of Amari-Chentsov on the homogeneous space 
$\mathcal{D}(M)/\mathcal{D}_\mu(M)$ of diffeomorphisms modulo volume-preserving diffeomorphims 
of a compact manifold $M$.
We show that in some cases their geodesic equations yield completely integrable 
Hamiltonian systems. 
\end{abstract} 

\maketitle

\noindent
{\small{\sc AMS Subject Classification (2000)}: 53C21, 58D05, 62B10.}

\noindent
{\small{\sc Keywords}: Diffeomorphism groups, geometric statistics, geodesics, integrable systems.}

\numberwithin{equation}{section} 


\section{Introduction} 
In a recent paper \cite{klmp} several key notions of geometric statistics were developed 
on diffeomorphism groups and their quotient spaces equipped with right-invariant metrics. 
For example, the metric defined by (the homogeneous part of) the Sobolev $H^1$ 
inner product of vector fields on the underlying compact manifold was shown to induce 
on the quotient of the diffeomorphism group by its subgroup of volume-preserving diffeomorphisms 
an infinite-dimensional analogue of the Fisher-Rao metric 
while geodesics of its Levi-Civita connection - when the underlying manifold is the circle - 
were shown to be related to solutions of a well-known one-dimensional completely integrable 
equation \cite{km, le}. 
Furthermore, the authors also described analogues of the so-called $\alpha$-connections 
introduced in geometric statistics by Chentsov \cite{ch} and Amari \cite{AN} 
and pointed out integrability of another geodesic equation corresponding to $\alpha=-1$. 

Our goals in this paper are to provide a proof of the construction in \cite{klmp} 
of Amari-Chentsov connections for circle diffeomorphisms (Theorem \ref{thm1}), 
to generalize this construction to diffeomorphism groups of higher-dimensional manifolds (Theorem \ref{thm3}) 
and finally, as a by-product, to show integrability of the geodesic equations corresponding to $\alpha=1$ (Theorem \ref{affineremark}, Corollary \ref{affineremarkndim}). 

\subsection{Diffeomorphism groups and the Fisher-Rao metric} 

Let $M$ be a compact Riemannian manifold without boundary. 
Let $\mathcal{D}(M)$ denote the group of smooth diffeomorphisms of $M$ and let $\mathcal{D}_\mu(M)$ 
be its subgroup of diffeomorphisms preserving the volume form $\mu$ on $M$. 
It is well-known that the completion of $\mathcal{D}(M)$ (resp. $\mathcal{D}_\mu(M)$) 
in the $H^s$ Sobolev norm with $s>n/2 +1$ can be equipped with the structure of 
a smooth Hilbert manifold whose tangent space at the identity diffeomorphism $e$ consists of 
all $H^s$ vector fields (resp. all divergence-free $H^s$ vector fields) on $M$. 
However, for what follows it will be sufficient to work with the smooth category. 
It will also be convenient to normalize the Riemannian volume $\mu(M)=1$. 
For any $\eta \in \mathcal{D}(M)$ and any $V, W \in T_\eta\mathcal{D}(M)$ 
we set 
\begin{align} \label{H1met} 
\langle V, W \rangle_{\dot{H}^1} 
= 
\frac{1}{4} \int_M \mathrm{div}\, v \, \mathrm{div}\, w \, d\mu 
\end{align} 
where $V=v\circ\eta$ and $W=w\circ\eta$ with $v, w \in T_e\mathcal{D}(M)$ 
to obtain a right-invariant (degenerate) $\dot{H}^1$ metric on $\mathcal{D}(M)$. 

The geometry of this metric turns out to be particularly remarkable. 
The homogeneous space $\mathcal{D}(M)/\mathcal{D}_\mu(M)$ 
can be naturally identified with the set of smooth probability densities, i.e., 
smooth functions $\rho >0$ on $M$ satisfying the condition $\int_M \rho \, d\mu = 1$. 
If $N$ is an $n$-dimensional submanifold of such densities $\rho = \rho_{t_1\dots t_n}$ 
parameterized by 
$(t_1, \dots , t_n) \in \mathbb{R}^n$ 
then recall that the Fisher-Rao metric on $N$ is given by the formula 
\begin{align} \label{FRmet} 
g_{ij} = \int_M \frac{\partial\log\rho}{\partial t_i} \frac{\partial\log\rho}{\partial t_j} \rho\, d\mu 
\quad\qquad 
1 \leq i,j \leq n. 
\end{align} 
It turns out that 
the right-invariant $\dot{H}^1$ metric defined by \eqref{H1met} on $\mathcal{D}(M)$ 
descends to a (nondegenerate) metric on $\mathcal{D}(M)/\mathcal{D}_\mu(M)$ 
and 
the map $\eta \mapsto \sqrt{\mathrm{Jac}_\mu\eta}$ defines an isometry between 
$\mathcal{D}(M)/\mathcal{D}_\mu(M)$ and a subset of the unit sphere in $L^2(M,d\mu)$ 
with the canonical round metric. 
Furthermore, 
the restriction of the $\dot{H}^1$ metric to any finite-dimensional submanifold $N$ of 
$\mathcal{D}(M)/\mathcal{D}_\mu(M)$ coincides with the Fisher-Rao metric \eqref{FRmet} 
on $N$ 
while 
its Riemannian distance is the spherical Hellinger distance between probability densities on $M$. 
Proofs of these statements can be found in Section 3 of \cite{klmp}. 

Thus, in the framework of diffeomorphism groups, information geometry associated with 
the Fisher-Rao metric and its spherical Hellinger distance can be viewed as an $\dot{H}^1$ analogue 
of standard optimal transport associated with the metric\footnote{This metric is sometimes called Otto's metric.} 
on $\mathcal{D}(M)/\mathcal{D}_\mu(M)$ induced by the (non-invariant) $L^2$ metric 
on $\mathcal{D}(M)$ and whose Riemannian distance is the celebrated Kantorovich (or Wasserstein) 
distance, cf. \cite{v}.

\subsection{Divergence functions and dual connections} 

Recall that a divergence on an $n$-dimensional manifold $N$ is a smooth function 
$
D:  N \times N \to \mathbb{R}
$ 
satisfying 
$
D(p \| q) \geq 0
$ 
with equality if and only if $p=q$ and such that the matrix $g^D_{ij}$ defined in a chart at $p \in N$ by
\begin{equation} \label{eq:Dmetric} 
g^D_{ij}(p) 
= 
-\tfrac{\partial}{\partial p_i}\tfrac{\partial}{\partial q_j} D(p \| q)\vert_{p=q} 
\qquad\qquad 
1 \leq i,j \leq n 
\end{equation} 
is strictly positive definite for every $p \in N$. Equation (\ref{eq:Dmetric}) defines a Riemannian metric on $N$ with covariant derivative determined by 
\begin{equation} \label{eq:Dconn} 
\Gamma^D_{ij,k} 
= 
- \tfrac{\partial}{\partial p_i} \tfrac{\partial}{\partial p_j} \tfrac{\partial}{\partial q_k} D(p \| q)\vert_{p=q} 
\qquad\qquad 
1 \leq i,j,k \leq n. 
\end{equation} 

In what follows we shall consider on $\mathcal{D}(M)\times\mathcal{D}(M)$ the functions 
\begin{align} \label{Dalpha} 
&D^{(\alpha)}(\xi \| \eta) 
= 
\frac{1}{1 - \alpha^2} \left( 1 - \int_{M} 
( \mathrm{Jac}_\mu \xi )^{\frac{1-\alpha}{2}} ( \mathrm{Jac}_\mu \eta )^{\frac{1+\alpha}{2}} 
d\mu\right), 
\qquad 
-1 < \alpha < 1  
\\  \label{Dalpha1} 
&D^{(-1)}(\xi \| \eta) = D^{(1)}(\eta\|\xi) 
= 
\frac{1}{4} \int_M \big( \log\mathrm{Jac}_\mu\xi - \log\mathrm{Jac}_\mu\eta \big) \mathrm{Jac}_\mu\xi \, d\mu. 
\end{align}
These functions are well-defined on the homogeneous space $\mathcal{D}(M)/\mathcal{D}_\mu(M)$ 
and satisfy $D^{(\alpha)}(\xi\|\eta) \geq 0$ with equality if and only if $\xi$ and $\eta$ project onto 
the same probability density on $M$. 

Furthermore, recall that two affine connections $\nabla$ and $\nabla^*$ 
on a Riemannian manifold $N$ are called dual (or conjugate) relative to the Riemannian metric 
if for any vector fields $U$, $V$ and $W$ they satisfy 
$
W \langle U, V \rangle 
= 
\langle \nabla_W U, V \rangle + \langle U, \nabla^*_W V \rangle. 
$
Basic facts about dual connections and divergences can be found in \cite{AN} 
or \cite{kv}.

\section{The one-dimensional case: $\mathcal{D}(S^1)/\Rot(S^1)$} \label{section2} 

We first consider the case when the underlying manifold is the circle $S^1 = \mathbb{R}/\mathbb{Z}$. In this case $\mathcal{D}_\mu(S^1)$ is the space of rigid rotations $\Rot(S^1) \simeq S^1$. 
It will be convenient to identify the homogeneous space $\mathcal{D}(S^1)/\Rot(S^1)$ 
with the subgroup of circle diffeomorphisms which fix a prescribed point, for example with 
$\{ \eta \in \mathcal{D}(S^1): \eta(0)=0 \}$. Its tangent space at the identity can be identified with 
the space of smooth periodic functions vanishing at $0$. 
Furthermore, given any such function $u(x)$ we define the operator 
\begin{equation} \label{A} 
A^{-1}u(x) = - \int_0^x \int_0^y u(z) \, dz dy + x \int_0^1 \int_0^y u(z) \, dz dy 
\end{equation} 
i.e., the inverse of $A = -\partial_x^2$. 


The following is a reformulation of Theorem 6.2 stated (without proof) in \cite{klmp}. 
Our first objective is to provide a proof of this result. 

\begin{theorem}[cf. \cite{klmp}, Section 6] \label{thm1} \mbox{} 
\begin{enumerate} 
\item[(i)] 
Each divergence $D^{(\alpha)}$ induces on $\mathcal{D}(S^1)/\Rot(S^1)$ 
the $\dot{H}^1$ metric and an affine connection $\nabla^{(\alpha)}$ whose Christoffel symbols 
are given by 
\begin{equation} \label{Gammaformula} 
\Gamma^{(\alpha)}_\eta (W, V) 
= 
-\frac{1+\alpha}{2} \Big\{ A^{-1}\partial_x \big[ (V\circ\eta^{-1})_x (W\circ\eta^{-1})_x \big]\Bigr\} \circ\eta, \quad -1 \leq \alpha \leq 1. 
\end{equation} 
\item[(ii)] 
For any $\alpha$ the connections $\nabla^{(\alpha)}$ and $\nabla^{(-\alpha)}$ are dual 
with respect to the $\dot{H}^1$ metric and $\nabla^{(0)}$ is the self-dual Levi-Civita connection. 
\item[(iii)] 
The geodesic equation of $\nabla^{(\alpha)}$ on $\mathcal{D}(S^1)/\Rot(S^1)$ 
is the generalized Proudman-Johnson equation 
\begin{equation} \label{PJ} 
u_{txx} + (2-\alpha)u_x u_{xx} + u u_{xxx} = 0. 
\end{equation} 
In particular, $\alpha =0$ yields the completely integrable Hunter-Saxton equation 
\begin{equation} \label{HS} 
u_{txx} + 2u_x u_{xx} + u u_{xxx} =0 
\end{equation} 
and $\alpha =-1$ yields the completely integrable $\mu$-Burgers equation 
\begin{equation} \label{muB} 
u_{txx} + 3u_x u_{xx} + u u_{xxx} = 0.
\end{equation} 
\end{enumerate} 
\end{theorem}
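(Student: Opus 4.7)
The plan is to verify the three assertions by direct computation, reducing to the identity by right-invariance and then using the explicit form of $D^{(\alpha)}$ together with the operator $A=-\partial_x^2$ defined in \eqref{A}.

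For Part (i), I would use right-invariance to work at the identity coset, where tangent vectors are represented by periodic functions vanishing at $0$. For the metric, introduce one-parameter families $\xi(s)=e+sw+O(s^2)$ and $\eta(t)=e+tv+O(t^2)$, whose Jacobians expand as $\xi_x=1+sw_x+O(s^2)$ and $\eta_x=1+tv_x+O(t^2)$. Substituting into \eqref{Dalpha} and expanding $\xi_x^{(1-\alpha)/2}\eta_x^{(1+\alpha)/2}$ to bilinear order in $(s,t)$, the coefficient of $st$ is $\tfrac{1-\alpha^2}{4}v_xw_x$, which is cancelled exactly by the normalising prefactor $\tfrac{1}{1-\alpha^2}$ in $D^{(\alpha)}$; formula \eqref{eq:Dmetric} then reproduces \eqref{H1met}. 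For the Christoffel symbols I would use a two-parameter family $\xi(r,s)$ together with $\eta(t)$ and compute the mixed third derivative in \eqref{eq:Dconn}. Because $\xi$ is independent of $t$ and $\eta$ of $r,s$, only the $rs$-coefficient of $\xi_x^{(1-\alpha)/2}$ paired with the $t$-coefficient of $\eta_x^{(1+\alpha)/2}$ survives, giving a trilinear form proportional to $\int_{S^1}u_xv_xw_x\,dx$. Using self-adjointness of $A$ and integration by parts against $A^{-1}$, I would then identify this trilinear form with the $\dot{H}^1$ pairing of $U$ with the right-hand side of \eqref{Gammaformula}. Right-invariance extends the identification to arbitrary $\eta$, and the boundary cases $\alpha=\pm 1$ either follow by an analogous expansion of \eqref{Dalpha1} or as the limit $\alpha\to\pm 1$ of the generic computation.

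For Part (ii), I would use the standard fact that the divergence $D^*(p\|q):=D(q\|p)$ induces the same metric and the affine connection dual to that of $D$. Direct inspection of \eqref{Dalpha} and \eqref{Dalpha1} gives the symmetry $D^{(-\alpha)}(\eta\|\xi)=D^{(\alpha)}(\xi\|\eta)$, so $\nabla^{(\alpha)}$ and $\nabla^{(-\alpha)}$ are dual with respect to the $\dot{H}^1$ metric. Specialising to $\alpha=0$ yields a self-dual, torsion-free affine connection compatible with $g^{\dot{H}^1}$, which therefore coincides with its Levi-Civita connection. A direct check of the duality identity using \eqref{Gammaformula} and integration by parts is also available as an independent verification.

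For Part (iii), writing $\dot\eta=u\circ\eta$ so that $\ddot\eta=(u_t+uu_x)\circ\eta$, I would substitute \eqref{Gammaformula} into the geodesic equation $\nabla^{(\alpha)}_{\dot\eta}\dot\eta=0$ to obtain a scalar equation of the form $u_t+uu_x=c_\alpha A^{-1}\partial_x(u_x^2)$ with $c_\alpha$ linear in $\alpha$. Applying $\partial_x^2$ to both sides, using $\partial_x^2 A^{-1}=-I$ and the identity $(uu_x)_{xx}=3u_xu_{xx}+uu_{xxx}$, I arrive at \eqref{PJ} with coefficient $(2-\alpha)$ in front of $u_xu_{xx}$. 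The specialisations $\alpha=0$ and $\alpha=-1$ give \eqref{HS} and \eqref{muB}, whose complete integrability is recorded in \cite{km,le,klmp}. I expect the most delicate step to be the third-derivative computation in Part (i): the Taylor expansion of $D^{(\alpha)}$ has to be carried to trilinear order, only a single coefficient survives, and rewriting the resulting cubic integral as a $\dot{H}^1$ pairing with \eqref{Gammaformula} requires the mapping properties of $A$. Once this identification is secured, parts (ii) and (iii) reduce to a symmetry observation and a chain-rule plus integration-by-parts computation.
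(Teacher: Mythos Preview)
Your proposal is correct and follows essentially the same route as the paper: differentiate $D^{(\alpha)}$ via \eqref{eq:Dmetric} and \eqref{eq:Dconn} to obtain the metric and the Christoffel symbols, invoke the symmetry $D^{(-\alpha)}(\eta\|\xi)=D^{(\alpha)}(\xi\|\eta)$ (equivalently, the general Amari--Nagaoka duality) for (ii), and for (iii) substitute \eqref{Gammaformula} into the geodesic equation and apply $-\partial_x^2$. The only organisational difference is that you first reduce to the identity by right-invariance of $D^{(\alpha)}$ and Taylor-expand there, whereas the paper differentiates directly at a general $\eta$ using a three-parameter family with $\partial_t\eta(s,0,0)=W_{\eta(s,0,0)}$; this is the same computation with slightly different bookkeeping.
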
 
\begin{remark} \upshape 
The equation corresponding to $\alpha = 1$ also turns out to be integrable and its solutions 
can be given explicitly, see Theorem \ref{affineremark} below.
\end{remark} 
\begin{proof} 
The metrics induced by $D^{(\alpha)}$ and their connections can be calculated 
essentially as in finite dimensions using formulas \eqref{eq:Dmetric} and \eqref{eq:Dconn}. 
We first assume that $\alpha \neq \pm 1$. 
Given any tangent vectors $V$ and $W$ at $\eta \in \mathcal{D}(S^1)$ let $\eta(s, t)$ 
be a two-parameter family of diffeomorphisms in $\mathcal{D}(S^1)$ such that 
$\eta(0,0) = \eta$ with $\frac{\partial}{\partial s} \eta(0,0) = V$ and $\frac{\partial}{\partial t} \eta (0,0) = W$. 
From (\ref{eq:Dmetric}) and \eqref{Dalpha} we have 
\begin{align} \nonumber 
\langle V, W \rangle_{\alpha} 
&= 
- \tfrac{\partial}{\partial s}\big|_{s = 0} \tfrac{\partial}{\partial t}\big|_{t= 0} 
D^{(\alpha)}(\eta(s,0) \| \eta(0,t) ) 
\\  \label{alphametric} 
&= 
\frac{1}{1 - \alpha^2} \, 
\tfrac{\partial}{\partial s}\big|_{s = 0} \tfrac{\partial}{\partial t}\big|_{t= 0} 
\int_{S^1} \eta_x(s,0)^{\frac{1-\alpha}{2}} \eta_x(0,t)^{\frac{1+\alpha}{2}} dx 
\\ \nonumber 
&= 
\frac{1}{4} \int_{S^1} V_x W_x \, \eta_x^{\frac{-1-\alpha}{2}} \eta_x^{\frac{-1+\alpha}{2}} dx 
= 
\frac{1}{4} \int_{S^1} \frac{V_x W_x}{\eta_x} \, dx  
\\ \nonumber 
&= 
\langle V, W \rangle_{\dot{H}^1}. 
\end{align}

Now suppose that $W$ is a vector field on $\mathcal{D}(S^1)$ defined in a neighborhood of $\eta$. 
Let $\eta(s,t,r)$ be a three-parameter family 
of diffeomorphisms such that $\eta(0,0,0)=\eta$ with 
$\frac{\partial}{\partial s}\eta(0,0,0)=V$, $\frac{\partial}{\partial t}\eta(s,0,0)= W_{\eta(s,0,0)}$ for all sufficiently small $s$, 
and $\frac{\partial}{\partial r}\eta(0,0,0)=Z$. It is clear that such a map $\eta(s,t,r)$ exists.
From the formulas \eqref{eq:Dconn}, \eqref{Dalpha} and \eqref{alphametric} we have 
\begin{align}  \nonumber 
\langle \nabla_V^{(\alpha)} &W, Z \rangle_{\alpha} 
= 
\frac{1}{4}\int_{S^1} \frac{(\nabla_V^{(\alpha)} W)_x Z_x}{\eta_x} \, dx 
	\\ \nonumber
& = 
-\tfrac{\partial}{\partial s}\big|_{s = 0} 
\tfrac{\partial}{\partial t}\big|_{t = 0} 
\tfrac{\partial}{\partial r}\big|_{r = 0} 
D^{(\alpha)} \big( \eta(s, t, 0) \| \eta(0, 0, r) \big) 
\\  \nonumber 
&= 
\frac{1}{1 - \alpha^2}
\tfrac{\partial}{\partial s}\big|_{s= 0} 
\tfrac{\partial}{\partial t}\big|_{t = 0} 
\tfrac{\partial}{\partial r}\big|_{r = 0} 
\int_{S^1} \eta_x(s,t,0)^{\frac{1-\alpha}{2}} \eta_x(0,0,r)^{\frac{1+\alpha}{2}} dx 
\\  \label{alphaconn} 
&= 
\frac{1}{4} \, 
\tfrac{\partial}{\partial s}\big|_{s = 0} 
\int_{S^1} W_x(\eta(s, 0,0)) \eta_x(s, 0,0)^{\frac{-1-\alpha}{2}} Z_x \eta_x^{\frac{-1+\alpha}{2}} dx 
\\  \nonumber 
&= 
\frac{1}{4} 
\int_{S^1} (DW \cdot V)_x Z_x \eta_x^{-1} dx 
- 
\frac{1+\alpha}{8} \int_{S^1} W_x V_x \eta_x^{\frac{-3-\alpha}{2}} Z_x \eta_x^{\frac{-1+\alpha}{2}} dx 
\\  \nonumber 
&= 
\frac{1}{4} \int_{S^1} 
\Big\{ ((DW \cdot V \big) \circ \eta^{-1} )_x 
- 
\frac{1+\alpha}{2} (V\circ\eta^{-1})_x (W\circ\eta^{-1})_x \Big\} 
(Z\circ\eta^{-1})_x \, dx  
\end{align}
and integrating by parts and using the fact that $Z$ is arbitrary we find that 
\begin{align*} 
(\nabla^{(\alpha)}_V W)_\eta 
= 
(DW \cdot V)(\eta) - \Gamma^{\alpha}_\eta (W, V) 
\end{align*} 
where the Christoffel map is given by \eqref{Gammaformula}. 

We will use the same notation for calculations in the remaining cases $\alpha= \pm 1$. 
From \eqref{H1met} and \eqref{Dalpha1} we have 
\begin{align} \nonumber 
\langle V, W \rangle_{-1} 
&= 
\langle V, W \rangle_{1} 
= 
-\tfrac{\partial}{\partial s}\big|_{s = 0} 
\tfrac{\partial}{\partial t}\big|_{t = 0} 
D^{(-1)}( \eta(s, 0) \| \eta(0, t) ) 
\\  \label{alphaPMmetric} 
&= 
-\frac{1}{4} 
\tfrac{\partial}{\partial s}\big|_{s = 0} 
\tfrac{\partial}{\partial t}\big|_{t = 0} 
\int_{S^1} \eta_x(s, 0) \big( \log \eta_x(s, 0) - \log \eta_x(0, t) \big) dx 
\\  \nonumber 
&= 
\frac{1}{4} 
\tfrac{\partial}{\partial s}\big|_{s = 0}
\int_{S^1} \eta_x(s, 0) \frac{W_x}{\eta_x} \, dx 
= 
\frac{1}{4} 
\int_{S^1} \frac{V_xW_x}{\eta_x} dx 
\\  \nonumber 
&= 
\langle V, W \rangle_{\dot{H}^1}. 
\end{align}

The corresponding affine connections can be obtained as in \eqref{alphaconn} using 
\eqref{eq:Dconn}, \eqref{Dalpha1} and \eqref{alphaPMmetric}. 
When $\alpha = -1$ we have 
\begin{align*} 
\int_{S^1} \frac{ (\nabla^{(-1)}_V W)_x Z_x }{ \eta_x } \, dx 
&= 
-\tfrac{\partial}{\partial s}\big|_{s = 0} 
\tfrac{\partial}{\partial t}\big|_{t = 0} 
\tfrac{\partial}{\partial r}\big|_{r = 0} 
\int_{S^1} \eta_x (s, t, 0) \log\frac{\eta_x(s,t,0)}{\eta_x(0,0,r)} \, dx 
\\ 
&= 
\tfrac{\partial}{\partial s}\big|_{s = 0} 
\tfrac{\partial}{\partial t}\big|_{t = 0} 
\int_{S^1} \eta_x(s, t, 0) \frac{Z_x}{\eta_x} \, dx 
	\\
& = 
\tfrac{\partial}{\partial s}\big|_{s = 0} 
\int_{S^1} W_x(s, 0, 0) \frac{Z_x}{\eta_x} \,dx 
\\ 
&= 
\int_{S^1} ( DW \cdot V)_x \frac{Z_x}{\eta_x} \, dx 
\end{align*} 
from which we deduce that 
\begin{equation} \label{flat-1} 
\Gamma^{(-1)}_\eta (W, V) = 0. 
\end{equation} 
When $\alpha =1$ an analogous calculation gives 
\begin{equation} \label{flat1} 
\Gamma^{(1)}_\eta (W, V) 
= 
- A^{-1}\partial_x \left\{ (V \circ \eta^{-1})_x (W \circ \eta^{-1})_x ) \right\} \circ \eta. 
\end{equation} 
This establishes the first part of the theorem. 

For the second part we need to verify that for any vector fields $X$, $Y$ and $Z$ 
on $\mathcal{D}(S^1)/\Rot(S^1)$ we have 
\begin{align} \label{eq:duality} 
X \langle Y, Z \rangle_{\dot{H}^1} 
= 
\langle \nabla^{(\alpha)}_X Y, Z \rangle_{\dot{H}^1} + \langle Y, \nabla^{(-\alpha)}_X Z \rangle_{\dot{H}^1}. 
\end{align}
This can be done either by a direct calculation as above or else it can be deduced 
from general properties of divergences of the type \eqref{Dalpha} and \eqref{Dalpha1} 
which are discussed in Chapter 3 of \cite{AN}. 
The fact that $\nabla^{(0)}$ is the Levi-Civita connection of the $\dot{H}^1$ metric follows 
at once from \eqref{eq:duality}.

The equation for geodesics of $\nabla^{(\alpha)}$ on $\mathcal{D}(S^1)/\Rot(S^1)$ 
has the form 
\begin{align*} 
\frac{d^2\gamma}{dt^2} = \Gamma^{(\alpha)}_\gamma \Big( \frac{d\gamma}{dt}, \frac{d\gamma}{dt} \Big). 
\end{align*} 
Setting $d\gamma/dt = u \circ \gamma$ defines a time-dependent vector field $u$ on the circle $S^1$ 
(i.e., a periodic function vanishing at $x=0$). 
Differentiating this relation with respect to $t$ and eliminating the first and second derivatives of $\gamma$ 
from the geodesic equation gives 
$$
(u_t + uu_x)\circ\gamma 
= 
\Gamma^{(\alpha)}_\gamma(u \circ \gamma, u \circ \gamma). 
$$
Using \eqref{Gammaformula} and composing both sides with $\gamma^{-1}$ we obtain 
a nonlinear pseudodifferential equation 
$$
u_t + uu_x 
= 
- \frac{1 + \alpha}{2}A^{-1} \partial_x (u_x^2) 
$$
which we can rewrite as a nonlinear PDE 
$$
-u_{txx} - 3u_xu_{xx} - uu_{xxx} = - (1 + \alpha)u_xu_{xx} 
$$
yielding \eqref{PJ}. 
\end{proof} 

\begin{remark}\upshape
The Hunter-Saxton equation \eqref{HS} can be alternatively derived by observing that it is
is the Euler-Arnold equation of $\nabla^{(0)}$ on $T_e\mathcal{D}(S^1)/\Rot(S^1)$ 
and as such it is obtained from the geodesic equation of the right-invariant $\dot{H}^1$ metric 
\eqref{H1met} by a standard reduction procedure, see \cite{km}. 
\end{remark}

\begin{remark} \upshape 
Another form of the Proudman-Johnson equation can be obtained by integrating \eqref{PJ} 
in the $x$ variable 
$$ 
u_{tx} + uu_{xx} + \frac{1 - \alpha}{2} \, u_x^2 = C(t) 
$$
where $C(t) = -\frac{1 + \alpha}{2} \int_{S^1} u_x^2 \, dx$. 
Observe that $C(t)$ is a conserved integral of the equation \eqref{HS} when $\alpha = 0$. 
\end{remark} 

\begin{remark}[$\alpha$-curvature] \upshape 
Using the Christoffel symbols \eqref{Gammaformula} it is possible to calculate the curvature of 
the $\alpha$-connections. It turns out to be proportional to the curvature of the $\dot{H}^1$ metric, 
i.e. for any vector fields $X$, $Y$ and $Z$ on $\mathcal{D}(S^1)/\Rot(S^1)$ we have 
\begin{equation} \label{Ralpha} 
R^{(\alpha)}(X,Y)Z 
= 
(1-\alpha^2) \Big( X \langle Y, Z \rangle_{\dot{H}^1} + Y\langle X,Z\rangle_{\dot{H}^1} \Big).
\end{equation} 
This formula can be computed as in finite dimensions; see \cite{mc} where a different choice of parameters 
is made. 
\end{remark} 

It turns out that the geodesic equation corresponding to \eqref{PJ} with $\alpha = 1$ 
can be integrated as well, albeit indirectly, by constructing affine coordinates for $\nabla^{(1)}$. 
Observe that from \eqref{Ralpha} we already know that the connections $\nabla^{(-1)}$ and $\nabla^{(1)}$ 
are flat. In the former case this is also evident from \eqref{flat-1}. 

\begin{theorem} \label{affineremark} 
The geodesic equation 
\begin{equation} \label{(1)} 
u_{txx} + u_xu_{xx} + uu_{xxx} = 0 
\end{equation} 
of $\nabla^{(1)}$ is integrable. Its general solution is given by
\begin{equation} \label{formulas} 
u = \frac{d\eta}{dt} \circ\eta^{-1} 
\quad 
\mbox{where} 
\quad 
\eta(t,x) = \frac{\int_0^x e^{a(y)t + b(y)} dy}{\int_{S^1} e^{a(x)t + b(x)} dx} 
\end{equation} 
and where $a, b$ are smooth mean-zero functions on $S^1$. 
\end{theorem}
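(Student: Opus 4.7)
The plan is to exploit the flatness of $\nabla^{(1)}$---evident from \eqref{Ralpha} or from the explicit Christoffel formula \eqref{flat1}---by constructing affine coordinates in which geodesics reduce to straight lines. Guided by the Amari-Chentsov picture of dually flat exponential families, the natural candidate is the log-density coordinate $\phi := \log \eta_x$, and the first step is to rewrite the geodesic equation of $\nabla^{(1)}$ in terms of $\phi(t,x) := \log \gamma_x(t,x)$ for a geodesic $\gamma(t)$.

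The key calculation is short. By the remark following Theorem \ref{thm1}, integrating \eqref{PJ} once in $x$ gives
\begin{equation*}
u_{tx} + u u_{xx} + \tfrac{1-\alpha}{2}\, u_x^2 \;=\; -\tfrac{1+\alpha}{2} \int_{S^1} u_x^2\, dx,
\end{equation*}
which for $\alpha=1$ collapses to $u_{tx} + u u_{xx} = C(t)$, where $C(t) := -\int_{S^1} u_x^2\, dx$ is independent of $x$. With $u = \gamma_t \circ \gamma^{-1}$, differentiating $\phi = \log \gamma_x$ twice in $t$ yields $\phi_t = u_x \circ \gamma$ and then
\begin{equation*}
\phi_{tt} \;=\; (u_{tx} + u u_{xx}) \circ \gamma \;=\; C(t),
\end{equation*}
so $\phi$ is affine in $t$ up to a purely temporal term: $\phi(t,x) = t\, a(x) + b(x) + c(t)$ for some $a, b \in C^\infty(S^1)$ and some function $c(t)$.

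Matching to \eqref{formulas} is then bookkeeping. The normalization $\int_{S^1} e^\phi \,dx = \int_{S^1} \gamma_x\, dx = 1$ forces $e^{c(t)} = 1/Z(t)$ with $Z(t) := \int_{S^1} e^{ta+b} \,dx$, hence $\gamma_x = e^{ta+b}/Z(t)$; integrating in $x$ subject to $\gamma(t,0)=0$ reproduces the formula for $\eta(t,x)$ in \eqref{formulas}, and $u = \gamma_t \circ \gamma^{-1}$ follows. Adding constants to $a$ or $b$ only rescales $Z(t)$ and is absorbed into the normalization, so we may take $a, b$ mean-zero.

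The point I expect to require the most care is the converse, that \emph{every} geodesic arises this way: one must verify that the two-parameter family of mean-zero pairs $(a,b)$ parametrizes all initial data $(\gamma_0, u_0)$ at $t = 0$. This reduces to solving $\gamma_{0,x} = e^b/Z(0)$ for $b$ (straightforward: take $b = \log \gamma_{0,x}$ and subtract its mean) and then solving $u_{0,x} \circ \gamma_0 = a - Z'(0)/Z(0)$ for $a$ under the mean-zero constraint. Consistency of the latter rests on $\int_{S^1} u_{0,x}\, dy = 0$, which holds automatically since $u_0$ is periodic and vanishes at $0$.
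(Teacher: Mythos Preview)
Your argument is correct, but it takes a different route from the paper. The paper works on the geometric side: it introduces the chart $\phi(\eta)=\log\eta_x-\int_{S^1}\log\eta_x\,dx$ and then verifies, via the transformation law for Christoffel symbols and the explicit formula \eqref{flat1} for $\Gamma^{(1)}$, that $\tilde\Gamma^{(1)}\equiv 0$ in these coordinates; geodesics are then literally straight lines $\tilde\eta(t,x)=a(x)t+b(x)$, and inverting $\phi$ gives \eqref{formulas}. You bypass the Christoffel computation entirely and work directly with the integrated PDE: the identity $\phi_{tt}=(u_{tx}+uu_{xx})\circ\gamma=C(t)$ shows in one line that $\log\gamma_x$ is affine in $t$ up to a function of $t$ alone, and the normalization $\int_{S^1}\gamma_x\,dx=1$ pins down that function. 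Your approach is more elementary---it avoids the operator $A^{-1}$ and the somewhat delicate cancellation in the $\tilde\Gamma^{(1)}$ calculation---while the paper's approach makes the affine-coordinate interpretation of integrability explicit (the ``linearizing change of variables'' analogy with inverse scattering) and transports more transparently to the $n$-dimensional setting of Corollary~\ref{affineremarkndim}. Your final paragraph on matching initial data is a useful addition; the paper does not spell out this step.
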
 
\begin{proof} 
We will construct a chart on $\mathcal{D}(S^1)/\Rot(S^1)$ in which the Christoffel symbols 
of $\nabla^{(1)}$ vanish identically. Consider the map
\begin{equation} \label{fi} 
\eta \mapsto \phi(\eta) = \log \eta_x - \int_{S^1} \log \eta_x \, dx 
\end{equation} 
from $\mathcal{D}(S^1)/\Rot(S^1)$ to the space of smooth periodic mean-zero functions. 
To see how the Christoffel symbols transform under $\eta \mapsto \tilde\eta = \phi(\eta)$ 
we first compute the derivatives 
$$
D_\eta\phi (W) 
= 
\frac{W_x}{\eta_x} - \int_{S^1} \frac{W_x}{\eta_x} \, dx, 
\qquad 
D^2_\eta \phi (W, V) = -\frac{V_x W_x}{\eta_x^2} + \int_{S^1} \frac{V_x W_x}{\eta_x^2} \, dx 
$$
for any $V, W \in T_\eta \mathcal{D}(S^1)/\Rot(S^1)$. 
Next, from \eqref{A} and \eqref{flat1} we obtain 
$$
\tilde{\Gamma}^{(1)}_{\phi(\eta)} \big( D_\eta\phi (W), D_\eta\phi (V) \big) 
=  
D^2_\eta\phi (W, V) + D_\eta\phi \big( \Gamma^{(1)}_\eta (W, V) \big) 
$$
\begin{align*} 
&= 
-\frac{ V_x W_x }{\eta_x^2} 
+ 
\int_{S^1} \frac{ V_x W_x }{\eta_x^2} \, dx 
- 
\frac{ ( A^{-1} (v_x w_x)_x \circ \eta )_x }{\eta_x}  
+ 
\int_{S^1} \frac{ (A^{-1} (v_x w_x)_x \circ \eta)_x }{\eta_x} \, dx 
\\ 
&= 
- (v_x w_x)\circ\eta 
+ 
\int_{S^1} (v_x w_x)\circ \eta \, dx 
- 
(A^{-1} (v_x w_x)_x)_x \circ \eta 
+ 
\int_{S^1} (A^{-1} (v_x w_x)_x)_x \circ \eta \, dx 
\\ 
&= 
- (v_x w_x)\circ\eta +  \int_{S^1} (v_x w_x)\circ \eta \, dx 
- 
\Big( - v_x w_x + \int_{S^1} v_x w_x \, dx \Big) \circ \eta 
\\ 
&\qquad\qquad\qquad\qquad\qquad\qquad\qquad\qquad 
+
\int_{S^1}  \Big( - v_x w_x + \int_{S^1} v_x w_x dx \Big) \circ \eta \, dx 
= 0,
\end{align*}
where $v = V\circ\eta^{-1}$ and $w = W\circ\eta^{-1}$. 

We can now solve \eqref{(1)} as follows. 
Since $\tilde\Gamma^{(1)} \equiv 0$ all geodesics of $\nabla^{(1)}$ in the affine coordinates 
are the straight lines which can be written as 
$$ 
t \to \tilde{\eta}(t,x) = a(x) t + b(x) 
\qquad\qquad\qquad 
x \in S^1 
$$ 
for some smooth mean-zero periodic functions $a$ and $b$. 
Thus, given any such functions to construct a general solution $u$ it is sufficient to 
(i) invert the map $\phi$ in \eqref{fi} to obtain the flow $t \to \eta(t) = \phi^{-1}\tilde{\eta}(t)$ 
and 
(ii) right-translate the velocity vector of $\eta(t)$ to the tangent space at the identity 
in $\mathcal{D}(S^1)/\Rot(S^1)$.\footnote{In the language of fluid dynamics the second step 
corresponds to going from Lagrangian to Eulerian coordinates.} 
The required formulas are those in \eqref{formulas}. 
\end{proof} 

It is worth pointing out that the above proof manifests integrability of (\ref{(1)}) in that 
it provides an explicit change of coordinates that linearizes the flow in the same spirit as 
the inverse scattering transform formalism.

\section{The $n$-dimensional case: $\mathcal{D}(M)/\mathcal{D}_\mu(M)$} 

We now turn to the general case when $M$ is an $n$-dimensional compact Riemannian manifold 
without boundary and work with the coset space $\mathcal{D}(M)/\mathcal{D}_\mu(M)$. 
Our next result is stated in analogy with Theorem \ref{thm1}. 

\begin{theorem} \label{thm3} \mbox{} 
\begin{enumerate} 
\item[(i)] 
Each divergence $D^{(\alpha)}$ induces on the quotient $\mathcal{D}(M)/\mathcal{D}_\mu(M)$ 
the $\dot{H}^1$ metric \eqref{H1met} and an affine connection $\nabla^{(\alpha)}$ 
given for a right-invariant vector field $W_\eta = w \circ \eta$ 
and a tangent vector $V = v \circ \eta$ by
\begin{align}\label{ndimnabla}
 ( \nabla^{(\alpha)}_V W)_\eta 
 = 
 -\left\{ \Delta^{-1}d \Bigl( 
 d\diver w \cdot  v +  \frac{1-\alpha}{2}  \diver{w} \diver{v} 
 \Bigr) \right\}^\sharp \circ \eta 
\end{align}
where $\alpha \in [-1, 1]$ and $\Delta = d\delta + \delta d$ denotes the Laplace-de Rham operator.
\item[(ii)] 
For any $\alpha$ the connections $\nabla^{(\alpha)}$ and $\nabla^{(-\alpha)}$ are dual 
with respect to the $\dot{H}^1$ metric and $\nabla^{(0)}$ is the Levi-Civita connection. 
\item[(iii)] 
The geodesic equation of $\nabla^{(\alpha)}$ on $\mathcal{D}(M)/\mathcal{D}_\mu(M)$ 
is equivalent to the following nonlinear PDE 
\begin{align} \label{PJn} 
d\varphi_t + d\iota_{\displaystyle u} d\varphi + (1-\alpha) \varphi \, d\varphi = 0, 
\qquad 
\varphi = \mathrm{div}\, u. 
\end{align} 
\end{enumerate} 
\end{theorem}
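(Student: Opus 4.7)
I shall follow the three-step pattern of Theorem~\ref{thm1}. The main new ingredient replacing the 1D chain rule $\partial_s \eta_x = V_x$ is the Lie-derivative identity
\[
\tfrac{\partial}{\partial s}\big|_{s=0} \Jac_\mu \eta(s) = (\diver v \circ \eta)\,\Jac_\mu \eta, \qquad v = V\circ\eta^{-1},
\]
coming from $L_v\mu = (\diver v)\mu$ and the change of variables $\eta^*\mu = \Jac_\mu\eta\cdot\mu$. For the metric in part (i), I would take a two-parameter family $\eta(s,t)$ with $\partial_s|_0\eta = V$, $\partial_t|_0\eta = W$ and substitute into \eqref{Dalpha}; each exponent $(1\mp\alpha)/2$ pulls out one divergence factor and the Jacobian cofactor is absorbed by the change of variables, giving $\langle V,W\rangle_\alpha = \tfrac14\int_M \diver v\,\diver w\,d\mu = \langle V, W\rangle_{\dot H^1}$, with the cases $\alpha=\pm1$ handled analogously using \eqref{Dalpha1}.

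For the connection formula I would take a three-parameter family $\eta(s,t,r)$ with $\partial_s|_0\eta=V$, $\partial_t|_0\eta(s,0,0)=w\circ\eta(s,0,0)$, $\partial_r|_0\eta=Z$ and apply \eqref{eq:Dconn}. The $r$-derivative yields a $\diver z$ factor; the mixed $(s,t)$-derivative of $\Jac_\mu\eta(s,t,0)^{(1-\alpha)/2}$ produces two terms when $\partial_s$ acts on the $\partial_t$-output, namely the directional derivative $d(\diver w)\cdot v$ (from differentiating $\diver w$ at the basepoint) and the product $\tfrac{1-\alpha}{2}\diver w\,\diver v$ (from differentiating the explicit Jacobian factor). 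A change of variables then produces
\[
\langle \nabla^{(\alpha)}_V W, Z\rangle_{\dot H^1} = \tfrac14 \int_M f\cdot\diver z\,d\mu, \qquad f = d(\diver w)\cdot v + \tfrac{1-\alpha}{2}\diver w\,\diver v.
\]
On the quotient, tangent vectors are right-translates of gradient (horizontal) fields, so writing $\nabla^{(\alpha)}_V W\circ\eta^{-1} = -(d\phi)^\sharp$ gives $\diver(-(d\phi)^\sharp) = \Delta\phi$ with $\Delta = d\delta + \delta d$; and because $\int_M \diver z\,d\mu = 0$ only the mean-zero part of $f$ is detected, so $\phi = \Delta^{-1}f$ is unambiguous, and the identity $d\Delta^{-1}=\Delta^{-1}d$ yields \eqref{ndimnabla}.

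Duality in (ii) follows most cleanly from the general fact \cite{AN} that $D$ and $D^*(\xi\|\eta)=D(\eta\|\xi)$ induce dual connections, combined with the manifest relation $D^{(\alpha)*}=D^{(-\alpha)}$ from \eqref{Dalpha}--\eqref{Dalpha1}; a direct verification from \eqref{ndimnabla} is also available. At $\alpha=0$, self-duality gives metric compatibility, while the symmetry of $f$ in $V,W$ modulo an exact 1-form (whose contribution vanishes against $\diver z$) gives torsion-freeness, so $\nabla^{(0)}$ is Levi-Civita. For part (iii), set $\dot\gamma=u(t)\circ\gamma$ and apply \eqref{ndimnabla} to the time-dependent right-invariant extension $\tilde W(t,\eta)=u(t)\circ\eta$. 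The geodesic condition $0 = \partial_t\tilde W + \nabla^{(\alpha)}_{\dot\gamma}\tilde W$ reduces, after cancelling $\circ\gamma$, to
\[
u_t = \bigl\{\Delta^{-1}d\bigl(d\varphi\cdot u + \tfrac{1-\alpha}{2}\varphi^2\bigr)\bigr\}^\sharp, \qquad \varphi = \diver u.
\]
Taking $\diver$ and using $\diver\{\Delta^{-1}d\omega\}^\sharp = -\omega$ modulo constants, then applying $d$ to eliminate the additive constant, produces exactly \eqref{PJn} upon noting that $d\iota_u d\varphi = d(u\cdot\varphi)$ and $\tfrac{1-\alpha}{2}d(\varphi^2) = (1-\alpha)\varphi\,d\varphi$.

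The principal obstacle is the Hodge-theoretic identification of $\nabla^{(\alpha)}_V W$ from its $\dot H^1$ inner product: the gradient/divergence-free decomposition of $T_e\mathcal D(M)$, the mean-zero reduction needed for $\Delta^{-1}$ to be well-defined, and the sign conventions of $\Delta=d\delta+\delta d$ together with the musical isomorphism $\sharp$ must all be handled consistently. Once this bookkeeping is in place the geodesic reduction in (iii) becomes a short manipulation; before it is, the various divergences, gradients, and constants of integration are easy to confuse.
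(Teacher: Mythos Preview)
Your proposal is correct and follows essentially the same route as the paper: the same Jacobian variation identity and three-parameter family for (i), the same appeal to $D^{(\alpha)*}=D^{(-\alpha)}$ and \cite{AN} for (ii), and the same reduction to $u_t=(\Delta^{-1}df)^\sharp$ for (iii). The only cosmetic differences are that the paper isolates the Hodge step as a separate lemma (namely $\diver\bigl(-(\Delta^{-1}df)^\sharp\bigr)=f-\int_M f\,d\mu$) rather than invoking the horizontal/gradient ansatz, and in (iii) it applies $\Delta$ to $u_t^\flat$ and disposes of the extra term by showing $\delta d u_t^\flat=0$ via harmonicity of $du_t^\flat$, whereas you take $\diver$ then $d$; both arrive at \eqref{PJn}.
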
 
\begin{proof} 
Despite the fact that we are now working with cosets the computations involved in the proof of 
statements (i) and (ii) are similar to the one-dimensional case in Section \ref{section2}. 
We will prove (i) for $\alpha \in (-1,1)$; the proofs for $\alpha = \pm 1$ are analogous 
and will be omitted. 

Let $W$ be a right-invariant vector field on $\mathcal{D}(M)$ defined in a neighborhood of 
$\eta \in \mathcal{D}(M)$. 
Given $V, Z \in T_\eta \mathcal{D}(M)$ let $\eta(s,t,r)$ be a three-parameter family of 
diffeomorphisms such that $\eta(0,0,0)=\eta$ with 
$\frac{\partial}{\partial s}\eta(0,0,0)=V$, $\frac{\partial}{\partial t}\eta(s,0,0)= W_{\eta(s,0,0)}$ 
for all sufficiently small $s$ and $\frac{\partial}{\partial r}\eta(0,0,0)=Z$. 
%
Using the identity 
\begin{align*}
\tfrac{\partial}{\partial t}\big|_{t = 0} \Jac_\mu\eta(s,t,0)
& = \diver(W_{\eta(s,0,0)} \circ \eta^{-1}(s,0,0)) \circ \eta(s,0,0) \, \Jac_\mu\eta(s,0,0)
	\\
& = \diver w \circ \eta(s,0,0) \, \Jac_\mu\eta(s,0,0),
\end{align*}
where $W_\eta = w \circ \eta$ 
and similar computations for the partial derivatives in $s$ and $r$ 
we have 
\begin{align}  \nonumber 
\langle & \nabla_V^{(\alpha)} W, Z \rangle_{\alpha} 
= 
\frac{1}{4}\int_{M} \diver\bigl((\nabla_V^{(\alpha)} W) \circ \eta^{-1}\bigr) \diver(Z \circ \eta^{-1}) \, d\mu 
	\\ \nonumber
&=  
-\tfrac{\partial}{\partial s}\big|_{s = 0} 
\tfrac{\partial}{\partial t}\big|_{t = 0} 
\tfrac{\partial}{\partial r}\big|_{r = 0} 
D^{(\alpha)} \big( \eta(s, t, 0) \| \eta(0, 0, r) \big) 
\\  \nonumber 
&= 
\frac{1}{1 - \alpha^2}
\tfrac{\partial}{\partial s}\big|_{s= 0} 
\tfrac{\partial}{\partial t}\big|_{t = 0} 
\tfrac{\partial}{\partial r}\big|_{r = 0} 
\int_{M} (\Jac_\mu\eta(s,t,0))^{\frac{1-\alpha}{2}} (\Jac_\mu \eta(0,0,r))^{\frac{1+\alpha}{2}} d\mu 
\\  \nonumber
&= 
\frac{1}{4} \, 
\tfrac{\partial}{\partial s}\big|_{s = 0} 
\int_{M} \diver w \circ \eta(s,0,0) \, (\Jac_\mu \eta(s,0,0))^{\frac{1-\alpha}{2}} 
\diver(Z \circ \eta^{-1}) \circ\eta \, (\Jac_\mu \eta)^{\frac{1+\alpha}{2}} d\mu 
\\  \nonumber 
&= 
\frac{1}{4} 
\int_M \Bigl\{ d\diver w \circ \eta \cdot V 
	\\ \nonumber
&\hspace{2cm}+ \frac{1-\alpha}{2} \diver w \circ\eta 
\diver(V\circ\eta^{-1})\circ\eta\Bigr\} \diver(Z\circ\eta^{-1})\circ\eta \, \Jac_\mu\eta \, d\mu
\\  \nonumber 
&= 
\frac{1}{4} 
\int_M \Bigl\{ d\diver w \cdot v
+ 
\frac{1-\alpha}{2} \diver{w} \diver{v}\Bigr\} \diver(Z\circ\eta^{-1}) \, d\mu,
\end{align}
where $V = v \circ \eta$. 
Using integration by parts 
$$
\int_M f \diver X \, d\mu = - \int_M f \delta X^\flat d\mu = - \int_M df (X) \, d\mu
$$
and the fact that $Z$ is arbitrary, we find  
\begin{align*} 
d\diver\bigl\{(\nabla_V^{(\alpha)} W) \circ \eta^{-1}\bigr\}
= 
df \quad \text{i.e.} \quad \diver\bigl\{(\nabla_V^{(\alpha)} W) \circ \eta^{-1}\bigr\} 
= 
f - \int_M f d\mu,
\end{align*} 
where $f = d\diver w \cdot v + \frac{1-\alpha}{2} \diver{w} \diver{v}$.
The expression in (\ref{ndimnabla}) now follows from the next lemma. 
\begin{lemma}
  Let $\Delta$ be the Laplace-de Rham operator. Then
  $$\diver (-(\Delta^{-1} df)^\sharp) = f - \int_M f d\mu,  \qquad f \in C^\infty(M).$$
\end{lemma}
\begin{proof} 
If $g = \diver\bigl(-(\Delta^{-1} df)^\sharp\bigr) = \delta \Delta^{-1} df$ then $\Delta g = \delta df = \Delta f$ 
and 
since the kernel of $\Delta$ acting on functions consists of the constants, we deduce that 
$g = f + c$ for some constant $c$. Integrating over $M$ yields $0 = \int_M f d\mu + c$,
which determines $c$.
\end{proof} 

Calculation of $\nabla_V^{(\alpha)} W$ for an arbitrary (not necessarily right-invariant) vector field $W$ can be reduced to the right-invariant case as follows. Let $\eta \in \mathcal{D}(M)$ and let $W^R$ be the right-invariant vector field with $W^R_\eta = W_\eta$. 
Then 
\begin{align} \label{generalnabladef}
( \nabla_V^{(\alpha)} W )_\eta 
= 
(\nabla_V^{(\alpha)} (W - W^R))_\eta + ( \nabla_V^{(\alpha)} W^R )_\eta 
= 
(\mathcal{L}_V(W - W^R))_\eta + ( \nabla_V^{(\alpha)} W^R )_\eta,
\end{align}
where $\mathcal{L}_V$ denotes the Lie derivative in the direction of any vector field $\tilde{V}$ such that $\tilde{V}_\eta = V$. Indeed, if $Z$ is a vector fields satisfying $Z_\eta = 0$, then locally
$$(\nabla_V Z)_\eta = DZ(\eta) \cdot V - \Gamma_\eta(Z_\eta, V) = DZ(\eta) \cdot V = [V, Z]_\eta = (\mathcal{L}_V Z)_\eta.$$
Using (\ref{ndimnabla}) and (\ref{generalnabladef}) we can compute the affine connection 
$\nabla^{(\alpha)}$ for arbitrary fields on $\mathcal{D}(M)$ and this completes the proof of (i).

As in the proof of Theorem \ref{thm1}, (ii) can be established by a direct calculation 
or, alternatively, it can be deduced from the general properties of divergences of 
the type \eqref{Dalpha} and \eqref{Dalpha1}.

Regarding (iii) 
let $\gamma(t)$ be a curve in $\mathcal{D}(M)$ with $\gamma(0) = e$ and 
$\dot{\gamma}= d\gamma/dt = u \circ \gamma$. 
In the appendix we prove that \eqref{generalnabladef} implies 
\begin{align}\label{nablarelation}
\nabla_{\dot{\gamma}}^{(\alpha)}\dot{\gamma}
= 
u_t \circ \gamma + \nabla_{\dot{\gamma}}^{(\alpha)} \dot{\gamma}^R.
\end{align}
Using (\ref{ndimnabla}) and (\ref{nablarelation}) the geodesic equation 
$\nabla_{\dot{\gamma}}^{(\alpha)} \dot{\gamma} = 0$ 
can be written as 
$$
u_t 
= 
(\Delta^{-1}df)^\sharp 
\quad 
\mbox{where} 
\quad 
f = d\diver u \cdot u +  \frac{1-\alpha}{2}  (\diver{u})^2 
$$
so that setting $\varphi = \mathrm{div}\, u$ and taking the exterior derivative we get 
$$
-d\varphi_t + \delta d u_t^\flat 
= 
d\iota_ud\varphi + (1-\alpha)\varphi d\varphi. 
$$
The relation $u_t^\flat = \Delta^{-1} df$ implies $\Delta d u_t^\flat = d \Delta u_t^\flat = 0$ 
hence, in particular, $\delta du_t^\flat = 0$. 
This proves (\ref{PJn}).
\end{proof} 

Finally, we turn to integrability of the geodesic equations in \eqref{PJn} which to the best of our knowledge 
has not been studied in the literature before, with the exception of the case $\alpha =0$ in \cite{klmp}. 

\begin{corollary} \label{affineremarkndim} 
The geodesic equations of $\nabla^{(\alpha)}$ on $\mathcal{D}(M)/\mathcal{D}_\mu(M)$ 
corresponding to \eqref{PJn} with $\alpha =0, \pm 1$ are integrable equations in any dimension $n$. 
\end{corollary}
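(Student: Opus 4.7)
The plan is to establish integrability of the three cases $\alpha = 0, \pm 1$ separately, in each instance either exhibiting an affine chart that flattens $\nabla^{(\alpha)}$ or invoking the sphere isometry of the $\dot H^1$ metric. These three values correspond, in the language of information geometry, to the Fisher-Rao, exponential, and mixture connections respectively. The case $\alpha = 0$ is immediate: by Theorem \ref{thm3}(ii) the connection $\nabla^{(0)}$ is the Levi-Civita connection of the $\dot H^1$ metric, and the map $\eta \mapsto \sqrt{\mathrm{Jac}_\mu \eta}$ isometrically identifies $\mathcal{D}(M)/\mathcal{D}_\mu(M)$ with an open subset of the unit sphere of $L^2(M, d\mu)$ with the round metric (cf.\ \cite{klmp}); hence $\nabla^{(0)}$-geodesics pull back from great circles and admit an elementary closed form.

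For $\alpha = 1$ I would generalize the one-dimensional argument of Theorem \ref{affineremark} by constructing an affine chart. The natural candidate is
$$\phi(\eta) = \log \mathrm{Jac}_\mu \eta - \int_M \log \mathrm{Jac}_\mu \eta \, d\mu,$$
regarded as a map from $\mathcal{D}(M)/\mathcal{D}_\mu(M)$ onto the space of smooth mean-zero functions on $M$ (a bijection by Moser's theorem). The key step is to compute the first two derivatives of $\phi$ in directions $V = v \circ \eta$ and $W = w \circ \eta$, and then substitute into the standard transformation rule for Christoffel symbols,
$$\tilde\Gamma^{(1)}_{\phi(\eta)}\bigl(D\phi(W), D\phi(V)\bigr) = D^2\phi(W,V) + D\phi\bigl(\Gamma^{(1)}_\eta(W,V)\bigr).$$
Applying \eqref{ndimnabla} with $\alpha = 1$, so that the $\tfrac{1-\alpha}{2}$ term drops out, and using the identity $\delta \Delta^{-1} df = f - \int_M f \, d\mu$ from the lemma in the proof of Theorem \ref{thm3}, a direct calculation should produce complete cancellation and yield $\tilde\Gamma^{(1)} \equiv 0$. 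Consequently geodesics in the affine chart are straight lines $\tilde\eta(t,x) = a(x) + t b(x)$ with $a, b$ smooth mean-zero functions, and inverting $\phi$ followed by right-translation produces
$$\mathrm{Jac}_\mu \eta(t,x) = \frac{e^{a(x) + tb(x)}}{\int_M e^{a(y) + tb(y)} \, d\mu(y)}, \qquad u = \dot\eta \circ \eta^{-1},$$
the natural higher-dimensional analogue of \eqref{formulas}.

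For $\alpha = -1$ I would exploit the dual structure. Integrating \eqref{PJn} once at $\alpha = -1$ gives $\varphi_t + u(\varphi) + \varphi^2 = c(t)$ with $\varphi = \mathrm{div}\, u$, and a further integration over $M$ together with $\int_M \varphi \, d\mu = 0$ forces $c(t) \equiv 0$. Composing with the flow $\gamma(t,x)$ of $u$ reduces the PDE to the scalar Riccati ODE $\dot y + y^2 = 0$ at each $x$, where $y = \varphi \circ \gamma$, with solution $y(t,x) = y_0(x)/(1 + t y_0(x))$. Combining with the continuity relation $\partial_t \mathrm{Jac}_\mu \gamma = \mathrm{Jac}_\mu \gamma \cdot y$ and integrating in $t$ then produces the explicit flow $\mathrm{Jac}_\mu \gamma(t,x) = \rho_0(x)(1 + t y_0(x))$, which is affine in $t$. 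Thus $\rho = \mathrm{Jac}_\mu \eta$ itself plays the role of an affine parameter for $\nabla^{(-1)}$ and geodesics are linear in the density, consistent with the dual pair structure of $\nabla^{(\pm 1)}$.

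The main obstacle will be the Christoffel cancellation computation in the case $\alpha = 1$. In dimension one the corresponding step in Theorem \ref{affineremark} is essentially elementary and relies only on the operator $A^{-1}\partial_x$, whereas in arbitrary dimension it requires careful use of the Hodge Laplacian identity, of the musical isomorphisms $\sharp$ and $\flat$, and of several integrations by parts with 1-forms. Once this computation is in place, the two remaining cases $\alpha = 0$ and $\alpha = -1$ are comparatively clean.
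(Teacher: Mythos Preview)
Your proposal is correct and, for $\alpha=0$ and $\alpha=1$, follows the paper's own route essentially verbatim: the $\alpha=0$ case is deferred to the sphere isometry of \cite{klmp}, and for $\alpha=1$ the paper uses exactly your affine chart $\phi(\eta)=\log\Jac_\mu\eta-\int_M\log\Jac_\mu\eta\,d\mu$, then reads off the Jacobian formula you give. The paper, like you, flags the Christoffel cancellation as the main step but does not actually write it out in the $n$-dimensional setting; it simply asserts that $\phi$ is an affine chart and then verifies the resulting formula solves \eqref{(1ndim)} by checking that $(\varphi\circ\eta)_t$ is independent of $x$.

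The one genuine difference is your treatment of $\alpha=-1$. The paper merely notes that $\nabla^{(-1)}$ is flat and says integrability follows as in Section~\ref{section2}, implicitly relying on the fact that the density coordinate $\rho=\Jac_\mu\eta$ is already affine for $\nabla^{(-1)}$ (in one dimension this is the content of \eqref{flat-1}). You instead integrate \eqref{PJn} directly: after showing $c(t)=0$ you reduce along characteristics to the Riccati ODE $\dot y+y^2=0$, solve it, and recover the affine-in-$t$ behaviour of $\Jac_\mu\gamma$ a posteriori. Your argument is more self-contained and makes the explicit solution visible without first locating the flat chart; the paper's route is shorter but less informative. One minor point: your claim that $c(t)\equiv 0$ follows ``together with $\int_M\varphi\,d\mu=0$'' also needs the integration-by-parts identity $\int_M u(\varphi)\,d\mu=-\int_M\varphi\,\diver u\,d\mu=-\int_M\varphi^2\,d\mu$, which you should state explicitly.
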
 
\begin{proof} 
Equation (\ref{PJn}) with $\alpha =0$ was derived and shown to be integrable in \cite{klmp}. 
Since $\nabla^{(1)}$ and $\nabla^{(-1)}$ are flat, integrability of the other two equations 
can be established similarly to their one-dimensional analogues in Section \ref{section2}. 
We will consider the more complicated case $\alpha = 1$ with the corresponding equation 
\begin{equation} \label{(1ndim)} 
d\varphi_t + d\iota_u d\varphi = 0, 
\qquad 
\varphi = \diver{u}. 
\end{equation} 
Proceeding as in the proof of Theorem \ref{affineremark} we first show that the map 
$$ 
\eta \mapsto \phi(\eta) = \log \Jac_\mu\eta - \int_{M} \log \Jac_\mu \eta \, d\mu 
$$  
from $\mathcal{D}(M)/\mathcal{D}_\mu(M)$ to the space of smooth mean-zero functions on $M$ 
defines an affine chart for $\nabla^{(1)}$. 
Thus, as before, the geodesics of $\nabla^{(1)}$ in the affine coordinates defined by $\phi$ 
are the straight lines
$$ 
t \to \tilde{\eta}(t,x) = a(x) t + b(x) 
\qquad\qquad\quad 
x \in M 
$$ 
where $a$ and $b$ are smooth mean-zero functions on $M$. 
From this we find 
$$ 
\Jac_\mu \eta(t,x) = \frac{e^{a(x) t + b(x)}}{\int_M e^{a(x) t + b(x)} d\mu} 
\qquad\qquad 
x \in M 
$$ 
and combining this expression with the identity 
$ 
\frac{d}{dt}\Jac_\mu(\eta) = (\varphi \circ \eta) \Jac_\mu\eta, 
$ 
we obtain 
\begin{equation} \label{eq:sol} 
\varphi(t, \eta(t,x))
= 
a(x) - \frac{\int_M a(x) e^{a(x) t + b(x)} d\mu}{\int_M e^{a(x) t + b(x)} d\mu}.
\end{equation} 
Note that the time derivative of $\varphi \circ \eta$ is independent of $x$, 
so that 
$$ 
d\varphi_t + d\iota_u d\varphi
= 
d\bigl((\varphi \circ \eta)_t \circ \eta^{-1}\bigr)
= 0 
$$ 
which shows that \eqref{eq:sol} solves the equations in \eqref{(1ndim)}. 
\end{proof} 

\appendix
\section{Proof of (\ref{nablarelation})}\label{app}
\renewcommand{\theequation}{A.\arabic{equation}}
Let $\gamma(t)$ be a curve in $\mathcal{D}(M)$ with $\dot{\gamma}(t) = u(t) \circ \gamma(t)$. For any $t_0 >0$ we compute
\begin{align}\nonumber 
u_t(t_0) \circ \gamma(t_0) & = \biggl\{\frac{d}{dt}\bigg|_{t = t_0} \left(\dot{\gamma}(t) \circ \gamma(t)^{-1}\right)\biggr\} \circ \gamma(t_0)
	\\\label{A1}
& = \ddot{\gamma}(t_0) + D(\dot{\gamma}(t_0)) \cdot \biggl\{\frac{d}{dt}\bigg|_{t=t_0} \gamma(t)^{-1}\biggr\}\circ \gamma(t_0).
\end{align}
But differentiating the relation $\gamma(t)^{-1} \circ \gamma(t) = id$ with respect to $t$ we find
$$\biggl\{\frac{d}{dt}\bigg|_{t=t_0} \gamma(t)^{-1}\biggr\}\circ \gamma(t_0)
+ D(\gamma(t_0)^{-1})\circ\gamma(t_0) \cdot \dot{\gamma}(t_0) = 0.$$
Thus, (\ref{A1}) yields
\begin{align*}
u_t(t_0) \circ \gamma(t_0) 
& = \ddot{\gamma}(t_0) - D(\dot{\gamma}(t_0)) \cdot D(\gamma(t_0)^{-1})\circ\gamma(t_0) \cdot \dot{\gamma}(t_0)
	\\
& = \frac{d}{dt}\bigg|_{t = t_0} \left\{\dot{\gamma}(t) - \dot{\gamma}(t_0) \circ \gamma(t_0)^{-1} \circ \gamma(t)\right\}
= D(\dot{\gamma} - \dot{\gamma}^R) \cdot \dot{\gamma}(t_0).
\end{align*}
Equation (\ref{nablarelation}) now follows from (\ref{generalnabladef}).


\bibliographystyle{plain}
\bibliography{is}

\end{document}